\documentclass[12pt]{article}
\usepackage{latexsym,amssymb,float,amsmath,amsthm,enumerate,cite,geometry}
\geometry{a4paper,left=2cm,right=2cm, top=2cm, bottom=2.5cm}
\newtheorem{theorem}{Theorem}
\newtheorem{proposition}[theorem]{Proposition}
\newtheorem{conjecture}[theorem]{Conjecture}

\newtheorem{claim}{Claim}
\usepackage{lineno}
\usepackage{setspace}
\allowdisplaybreaks

\usepackage{tikz}
\tikzset{
dot/.style = {circle, fill, minimum size=#1,
inner sep=0pt, outer sep=0pt},
dot/.default = 4pt
}

\begin{document}
\onehalfspace

\title{Forest Cuts in Sparse Graphs}
\author{Vsevolod Chernyshev$^1$\and 
Johannes Rauch$^2$ \and 
Dieter Rautenbach$^2$}
\date{}

\maketitle
\vspace{-10mm}
\begin{center}
$^1$ 
Universit\'{e} Clermont-Auvergne, CNRS, Clermont-Auvergne-INP, LIMOS\\ 
63000 Clermont-Ferrand, France\\
\texttt{vchern@gmail.com}\\[5mm]
$^2$ Ulm University, Institute of Optimization and Operations Research\\ 
89081 Ulm, Germany\\
\texttt{$\{$johannes.rauch,dieter.rautenbach$\}$@uni-ulm.de}
\end{center}

\begin{abstract}
We propose the conjecture that 
every graph $G$ of order $n$ with less than $3n-6$ edges
has a vertex cut that induces a forest.
Maximal planar graphs do not have such vertex cuts and 
show that the density condition would be best possible.
We verify the conjecture for planar graphs
and show that 
every graph $G$ of order $n$ with less than $\frac{11}{5}n-\frac{18}{5}$ edges
has a vertex cut that induces a forest.\\[3mm]
{\bf Keywords}: independent cut; forest cut
\end{abstract}

\section{Introduction}\label{sec1}

We consider finite, simple, and undirected graphs and use standard terminology.

Confirming a conjecture due to Caro, 
it was shown by Chen and Yu \cite{chyu} 
that every graph $G$ of order $n$ with less than $2n-3$ edges 
has an independent cut, 
which is a vertex cut that is an independent set.
In the present paper we consider forests cuts,
which are vertex cuts inducing a forest. 
Our main objective is an analogue of the result of Chen and Yu.
More precisely, we ask which density condition 
implies the existence of a forest cut and pose the following conjecture.

\begin{conjecture}\label{conjecture1}
If $G$ is a graph of order $n$ with less than $3n-6$ edges, 
then $G$ has a forest cut.
\end{conjecture}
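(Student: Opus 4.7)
I would attempt the conjecture via a minimum counterexample argument. Suppose $G$ with $n$ vertices and $m<3n-6$ edges has no forest cut, and among all such graphs $G$ minimises $n$, and then $m$. Standard reductions clean up the structure: one may assume $G$ is $2$-connected (the disconnected case is trivial, and a cut vertex is itself a forest cut) with $\delta(G)\geq 3$, because a vertex $v$ of degree $2$ has two neighbours that induce at most one edge and separate $v$ from the rest of $G$.

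Next I would exploit low-degree vertices. Since $m<3n-6$, the average degree is strictly less than $6$, so some vertex $v$ has $d(v)\in\{3,4,5\}$. For each such $v$ (and $n$ sufficiently large), $N(v)$ separates $v$ from the rest of $G$, and $N(v)$ is a forest cut exactly when $G[N(v)]$ is acyclic. Hence in a counterexample every vertex of degree at most $5$ has a cycle inside its neighbourhood: if $d(v)=3$, then $G[N(v)]=K_3$ and $\{v\}\cup N(v)$ spans a $K_4$; for $d(v)\in\{4,5\}$ similarly rigid triangle-heavy configurations are forced, so $G$ locally resembles a piece of a planar triangulation.

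The third and crucial step is to turn this local density into a global edge-count contradiction. The natural tool is discharging: assign each vertex the charge $d(v)-6$, so the total charge equals $2m-6n<-12$, and design rules that transfer charge from high-degree vertices to low-degree vertices using the forced triangle structure, aiming for non-negative charge everywhere. A parallel route is induction: pick a degree-$3$ vertex $v$ with $G[N(v)]=K_3$, delete $v$ to get $G'=G-v$ with $m-3<3(n-1)-6$ edges, apply the inductive hypothesis to obtain a forest cut $S$ of $G'$, and attempt to lift $S$ to a forest cut of $G$ using the $K_4$ around $v$.

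The main obstacle lies here. The lifting can fail exactly when two neighbours of $v$ outside $S$ lie in different components of $G'-S$: then $v$ reconnects $G-S$, and $S$ must be modified locally without creating a cycle in the induced subgraph, which is delicate because the $K_4$ around $v$ is triangle-dense. Symmetric difficulties arise for the discharging approach when the forced triangles overlap arbitrarily. Since the extremal graphs are maximal planar graphs, any successful proof must essentially detect and exploit a chord-free cycle of length at least $4$ in the sparse graph $G$, much as the planar proof would use a non-triangular face; carrying this out in the absence of planarity is exactly what keeps Conjecture~\ref{conjecture1} open and what the paper's weaker bound $\tfrac{11}{5}n-\tfrac{18}{5}$ sidesteps by assuming more sparsity.
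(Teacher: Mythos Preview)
Your proposal is not a proof, and you yourself say so: the final two paragraphs explicitly identify the lifting step in the inductive argument as ``delicate'' and conclude that ``carrying this out in the absence of planarity is exactly what keeps Conjecture~\ref{conjecture1} open.'' This is accurate --- the paper does not prove Conjecture~\ref{conjecture1}; it is stated as an open conjecture, verified only for planar graphs (Proposition~\ref{proposition1}) and for graphs with a universal vertex (Proposition~\ref{proposition0}), while Theorem~\ref{theorem2} establishes only the weaker bound $\tfrac{11}{5}n-\tfrac{18}{5}$. So there is no paper proof to compare against.

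On the substance of your sketch: the preliminary reductions can be pushed further than you do. Any vertex cut of size at most $2$ induces a forest, so a minimum counterexample is $3$-connected, not merely $2$-connected with $\delta\geq 3$; and the paper shows (Claim~\ref{claim4conn}) that it is in fact $4$-connected, since a $3$-cut that is not a forest cut must induce a triangle, and splitting along that triangle decomposes $G$ into two smaller instances whose edge counts sum correctly. The genuine obstruction is the one you name: after deleting a degree-$3$ vertex $v$ with $G[N(v)]=K_3$ and obtaining a forest cut $S$ of $G-v$, adding $v$ back can reconnect $G-S$ whenever two vertices of $N(v)\setminus S$ lie in different components, and repairing $S$ locally without creating a cycle in $G[S]$ has no clean rule, precisely because the $K_4$ around $v$ makes any two neighbours of $v$ adjacent. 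Your discharging outline likewise stops at ``design rules'' without supplying any. So the proposal is a reasonable discussion of why the problem is hard, but it does not close the gap.
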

As a natural approach to Conjecture \ref{conjecture1}
we tried to adapt the elegant inductive proof of Chen and Yu from \cite{chyu},
which hinges on the following statement. 

\begin{theorem}[Chen and Yu, Theorem 1 in \cite{chyu}]\label{theorem1}
If $G$ is a $2$-connected graph of order $n$ with less than $2n-3$ edges,
then, for every vertex $u$ of $G$,
there is an independent cut $S$ of $G$ with $u\not\in S$.
\end{theorem}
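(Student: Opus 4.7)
The natural approach is induction on $n$. For the base case, small $n$ combined with 2-connectivity and the bound $|E(G)| < 2n-3$ forces $G$ to be a cycle, and the claim is verified directly: remove any two non-adjacent vertices of the cycle distinct from $u$.

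For the inductive step, the edge bound gives $\sum_v \deg(v) = 2|E(G)| < 4n-6$, so some vertex $v \neq u$ has degree at most $3$; by 2-connectivity, $\deg(v) \in \{2,3\}$. I would do a case analysis on $\deg(v)$ and the local structure at $v$, with two possible outcomes in each case: either (i) $N(v)$ is itself an independent cut avoiding $u$, which occurs when $\deg(v)=2$, its two neighbors are non-adjacent, and $u \notin N(v)$, since removing $N(v)$ isolates $v$; or (ii) one reduces to a smaller 2-connected graph $G'$ still satisfying $|E(G')| < 2|V(G')|-3$, applies the inductive hypothesis to $(G',u)$, and lifts the resulting cut $S$ back to $G$.

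For the typical reductions: if $\deg(v)=2$ with $N(v)=\{x,y\}$ and $xy \in E(G)$, take $G' := G - v$, which loses exactly two edges and remains 2-connected (any path through $v$ can be rerouted along $xy$). If $xy \notin E(G)$ but $u \in \{x,y\}$, suppress $v$ by replacing it with the edge $xy$, a net loss of one edge that preserves 2-connectivity. If $\deg(v)=3$ with $N(v)=\{x,y,z\}$, delete $v$ and add at most one edge among $N(v)$ to restore 2-connectivity; the net edge change of at least $-2$ matches the decrease of the right-hand side $2n-3$. In every case the lifting argument uses that $v$ has at most $3$ neighbors, so $v$ cannot simultaneously touch three different components of $G' - S$, and hence $G - S$ still has at least two components.

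The main obstacle is showing 2-connectivity of $G'$ after the reduction, particularly in the $\deg(v)=3$ case, where selecting the correct edge (if any) to add among $\{x,y,z\}$ depends on how removing $v$ affects the block structure of $G$; a block--cut analysis of $G-v$ should identify the right choice, and in the degenerate situation where no valid choice exists, the block--cut tree of $G-v$ itself yields an independent cut avoiding $u$ directly. A secondary subtlety is the case $u \in N(v)$, which forbids using $N(v)$ as a direct cut and forces the reduction-and-lift route, motivating the edge-suppression variant described above.
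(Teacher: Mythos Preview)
The paper does not prove Theorem~\ref{theorem1}; it is quoted from Chen and Yu~\cite{chyu} and only described as an ``elegant inductive proof'' that ``relies on contractions.'' So there is no in-paper proof to compare your plan against. Your inductive scheme via deleting or suppressing a low-degree vertex is in the same spirit as what the paper attributes to~\cite{chyu}.

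That said, two steps of your plan do not go through as written.

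\emph{Edge count after suppression.} In the case $\deg(v)=2$, $N(v)=\{x,y\}$, $xy\notin E(G)$, $u\in\{x,y\}$, suppressing $v$ yields $G'$ with $n-1$ vertices and $|E(G)|-1$ edges. You need $|E(G')|<2(n-1)-3=2n-5$, i.e.\ $|E(G)|<2n-4$, but the hypothesis only gives $|E(G)|\le 2n-4$. When $|E(G)|=2n-4$ the strict inequality fails and the induction hypothesis does not apply to $G'$.

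\emph{Lifting the cut.} Your justification that ``$v$ cannot simultaneously touch three different components of $G'-S$, and hence $G-S$ still has at least two components'' is not the right obstruction. The cut $S$ of $G'$ may produce exactly two components $A,B$, and nothing prevents $v$ from having one neighbor in $A$ and another in $B$ (with the third neighbor in $S$, in $A$, or in $B$). Re-inserting $v$ then merges $A$ and $B$, so $S$ is no longer a cut of $G$. The same issue arises in the $\deg(v)=3$ reduction whether or not you add a repair edge: removing that edge can split one component, but adding $v$ back with three incident edges can merge everything again. A correct argument needs control over where the neighbors of $v$ lie relative to $S$ --- this is exactly why the Chen--Yu proof contracts a carefully chosen edge incident with the low-degree vertex rather than simply deleting the vertex, so that the contracted vertex sits on one side and the cut lifts cleanly.
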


Unfortunately, the obvious analogue of this statement is false.
In fact, the graph 
that arises from a cycle $x_1x_2\ldots x_{2k}$ of order $2k$
by adding the $k$ edges $x_1x_{k+1},x_2x_{k+2},\ldots,x_kx_{2k}$
and adding one universal vertex $y$
is $4$-connected of order $n=2k+1$
with only $\frac{5}{2}(n-1)$ edges 
and the universal vertex $y$ necessarily belongs to every vertex cut.
Furthermore, the proof of Theorem \ref{theorem1} in \cite{chyu}
relies on contractions. 
Adapting this contraction argument to the context of forest cuts
can lead to universal vertices
even though the initial graph had no universal vertex.

Nevertheless, universal vertices do not seem to be a fundamental problem
for Conjecture \ref{conjecture1}:
If $G$ is a graph of order $n$ with less than $3n-6$ edges 
that contains a universal vertex $u$, then
$G-u$ has order $n-1$ and less than $3n-6-(n-1)=2(n-1)-3$ edges.
By the result of Chen and Yu, the graph $G-u$ has an independent cut $S$.
Now, the set $\{ u\}\cup S$ is a forest cut of $G$
and we obtain the following.

\begin{proposition}\label{proposition0}
Conjecture \ref{conjecture1} holds for graphs that contain a universal vertex.
\end{proposition}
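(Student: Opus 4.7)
The plan is essentially the reduction already sketched in the paragraph immediately preceding the proposition: delete the universal vertex, apply the main result of Chen and Yu to the remainder, and paste the universal vertex back onto the resulting independent cut.

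In detail, suppose $G$ has order $n$ with fewer than $3n-6$ edges and contains a universal vertex $u$. The first step is the edge accounting. Since $u$ is universal, it is incident with exactly $n-1$ edges, so $G-u$ has order $n-1$ and strictly fewer than $(3n-6)-(n-1)=2(n-1)-3$ edges. This is precisely the density hypothesis of Chen and Yu's theorem on independent cuts, applied to the graph $G-u$.

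The second step is to invoke that theorem to obtain an independent vertex cut $S$ of $G-u$, and then to verify that $T:=\{u\}\cup S$ is a forest cut of $G$. It is a vertex cut because $G-T=(G-u)-S$, which is disconnected by the choice of $S$. It induces a forest because $u$ is universal, so it is adjacent to every vertex of $S$, while $S$ itself is independent; thus $G[T]$ is the star $K_{1,|S|}$, which is a tree.

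There is no genuine obstacle, only a couple of boundary cases to keep in mind. The hypothesis $|E(G)|<3n-6$ is vacuous for $n\le 2$, and for $n=3$ one can check by hand that a graph of order $3$ with a universal vertex and at most $2$ edges trivially admits a forest cut; in all larger cases the application of Chen and Yu's theorem to $G-u$ is straightforward, with the understanding that if $G-u$ is already disconnected, then $S=\emptyset$ serves as an independent cut and $T=\{u\}$ is then a (trivial) forest cut of $G$.
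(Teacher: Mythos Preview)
Your proof is correct and follows exactly the same approach as the paper, which is the reduction sketched in the paragraph immediately preceding the proposition: remove the universal vertex, apply Chen and Yu's theorem to obtain an independent cut of $G-u$, and add $u$ back to obtain a star (hence a forest) cut of $G$. Your additional handling of the boundary cases $n\le 3$ and of the possibility that $G-u$ is already disconnected is more explicit than the paper's discussion, but the argument is the same.
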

Not only universal vertices necessarily belong to every forest cut;
the only forest cut of the $3$-connected graph 
that arises from $K_{3,n-3}$ with $n>6$
by adding a cycle of length less than $n-3$ 
in the partite set of order $n-3$
is the other partite set of order $3$.

It is well known that $3n-6$ is exactly the number of edges 
of maximal planar graphs of order $n$
and it is not difficult to show Conjecture \ref{conjecture1} for planar graphs.
Maximal planar graphs also show that 
Conjecture \ref{conjecture1} would be best possible.
We postpone proofs to Section \ref{section2}.

\begin{proposition}\label{proposition1}
Conjecture \ref{conjecture1} is true for planar graphs.
Furthermore, no maximal planar graph has a forest cut.
\end{proposition}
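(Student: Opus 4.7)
The plan is to tackle the second claim first, since it captures the topological essence of the proposition, and then to carry out an induction on $n$ for the first claim.

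For the second claim, embed the maximal planar graph $G$ in the sphere so that every face is a triangle. Given a vertex cut $S$, pick a component $A$ of $G-S$ and let $N = N(A) \subseteq S$. The key construction is the subgraph $H \subseteq G[N]$ consisting of those edges that lie on the topological boundary between the union of the (triangular) faces incident to a vertex of $A$ and its complement in the sphere. By construction $H$ separates the sphere: one side contains $A$, and the other contains $V(G) \setminus (S \cup A)$, which is nonempty because $S$ is a cut. On the other hand, Euler's formula applied to any planar forest shows that a forest embedded in the sphere has exactly one face, so its complement in the sphere is connected and therefore cannot separate it. Hence $H$, and with it $G[S]$, must contain a cycle, so $S$ is not a forest cut.

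For the first claim, I would proceed by induction on $n$. The small cases are trivial. In the inductive step, let $G$ be planar with $|E(G)| < 3n-6$. If $G$ is not $3$-connected, any minimum vertex cut has at most two vertices and induces at most one edge, hence is a forest cut. If $G$ has a vertex of degree at most $2$, its neighborhood is a forest cut. Assume therefore that $G$ is $3$-connected with $\delta(G) \geq 3$; since $G$ is planar, $\delta(G) \leq 5$. Pick a vertex $v$ of minimum degree. If $G[N(v)]$ is a forest, then $N(v)$ itself is a forest cut (it isolates $v$). Otherwise $G[N(v)]$ contains a cycle; delete $v$ and apply the inductive hypothesis to $G' = G-v$, whose edge count still satisfies $|E(G')| < 3(n-1) - 6$, producing a forest cut $S'$ of $G'$. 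Since $v \notin S'$, $G[S'] = G'[S']$ is a forest, so the remaining task is to verify that $S'$ is a vertex cut of $G$ as well.

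The verification is the crux. Adding $v$ back to $G' - S'$ merges exactly those components of $G'-S'$ that meet $N(v) \setminus S'$. The case $\deg(v) = 3$ with $G[N(v)] = K_3$ is clean: any two of $v$'s remaining neighbors are adjacent in $G'$, hence lie in the same component of $G'-S'$, so $v$ joins a single component and disconnectedness is preserved. When $\deg(v) \in \{4,5\}$ and $G[N(v)]$ contains a cycle, the same conclusion follows as long as the vertices of $N(v) \setminus S'$ span a connected subgraph of $G'-S'$. The obstacle is that removing the vertices of $S' \cap N(v)$ from $G[N(v)]$ can disconnect it (consider removing two opposite vertices of a $C_4$ in $N(v)$). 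My plan is to use the cyclic ordering of $N(v)$ around $v$ given by the planar embedding: when the straightforward lift fails, I would either switch to a different forest cut of $G'$ that respects this cyclic structure, or replace $S'$ by $S' \cup \{v\}$ and verify---using the planar neighborhood structure---that $v$'s edges into $S'$ do not close a cycle in $G[S']$. This case analysis for $\delta(G) \in \{4,5\}$ is the main technical obstacle I foresee.
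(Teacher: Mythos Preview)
Your argument for the second statement is correct and is a pleasant global variant of the paper's local argument. The paper takes $S$ minimal, picks a leaf $u$ of the forest $G[S]$, finds two cyclically consecutive neighbours $v,w$ of $u$ lying in different components of $G-S$, and notes that in a triangulation such $v,w$ are adjacent. Your version---the boundary of the union of closed faces meeting a component $A$ is a subgraph of $G[N(A)]$ that separates the sphere, and a forest in the sphere has a single face and hence connected complement---packages the same idea topologically.

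For the first statement, however, there is a genuine gap, and it is precisely the one you flag. With $\deg(v)\in\{4,5\}$ and $G[N(v)]$ containing a cycle, the forest cut $S'$ handed to you by the inductive hypothesis may have $N(v)\setminus S'$ spread over several components of $G'-S'$. Neither proposed repair is forced to work. For $S'\cup\{v\}$ to induce a forest you need that no two vertices of $N(v)\cap S'$ lie in the same tree of $G'[S']$; nothing in the hypothesis prevents, say, $N(v)=\{a,b,c,d\}$ with $G[N(v)]$ the $4$-cycle $abcda$, $a,c\in S'$ joined by a path in $G'[S']$, and $b,d$ in distinct components of $G'-S'$. And ``switch to a different forest cut of $G'$'' asks for more than the inductive hypothesis provides: it gives you \emph{some} forest cut, not one adapted to the cyclic structure around $v$. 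Strengthening the hypothesis to allow avoiding a prescribed vertex is exactly the move that the paper shows fails in the forest-cut setting (see the discussion following Theorem~\ref{theorem1}), so there is no cheap fix along these lines.

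The paper avoids this entirely: it does not induct. Since adding edges can only turn a forest cut of the larger graph into a forest cut of the smaller one, it suffices to handle a maximal planar $G$ minus a single edge $xy$. Embedding so that $xyz$ bounds the outer face, the cyclic neighbours of $z$ form an $x$--$y$ path in $G-xy$; a shortest monotone $x$--$y$ path $Q$ inside this neighbourhood is induced, hence its vertex set is a forest cut whenever the cycle $Q+xy$ encloses a vertex, and otherwise a two-vertex cut suffices. This direct construction sidesteps the lifting problem that blocks your induction.
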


Le and Pfender \cite{lepf} showed that 
all graphs $G$ of order $n$ with exactly $2n-3$ edges 
that do not have an independent cut 
arise from the triangle $K_3$ and the triangular prism $\bar{C}_6$
by recursively glueing graphs along edges $K_2$ or triangles $K_3$.
Similarly, the graphs that arise from maximally planar graphs
by recursively glueing graphs along $K_3$s or $K_4$s
are further examples 
showing that Conjecture \ref{conjecture1} would be best possible.

Our main contribution is the following result, 
whose proof is based on local considerations and linear programming.

\begin{theorem}\label{theorem2}
If a graph $G$ of order $n$ has less than $\frac{11}{5}n-\frac{18}{5}$ edges,
then $G$ has a forest cut.
\end{theorem}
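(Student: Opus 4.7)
I would argue by contradiction, taking $G$ to be a counterexample of minimum order $n$, and derive a contradiction by showing that the structural properties forced on $G$ already imply $|E(G)|\ge\frac{11}{5}n-\frac{18}{5}$.

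The first block of observations fixes the basic structure of $G$. Any separator of size at most two induces at most one edge, hence is a forest cut, so $G$ is $3$-connected. If $G$ had a universal vertex $u$, then $G-u$ would have fewer than $\frac{11}{5}n-\frac{18}{5}-(n-1)=\frac{6n-13}{5}$ edges, which for $n\ge 4$ is smaller than $2(n-1)-3$, so Theorem~\ref{theorem1} yields an independent cut $S$ of $G-u$, and $\{u\}\cup S$ is a star and thus a forest cut of $G$; so $G$ has no universal vertex. Therefore, for every $v$, the set $N(v)$ is a cut of $G$, and since $G$ has no forest cut, $G[N(v)]$ must contain a cycle.

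The cycle-in-neighborhood condition is very restrictive for low-degree vertices and yields the local information to feed into the linear program. Every vertex of degree $3$ has its three neighbors forming a triangle, so it lies in a $K_4$; using $3$-connectivity (and excluding $G=K_4$ by a direct check), one shows that no two degree-$3$ vertices can share the same $K_4$. Consequently every degree-$3$ vertex has three neighbors of degree at least $4$, and a vertex of degree $d\ge 4$ can have at most $d-2$ neighbors of degree $3$, because each degree-$3$ neighbor demands two distinct degree-$\ge 4$ ``partners'' inside the common neighborhood. Similar local analysis applies to degree-$4$ vertices, whose neighborhood either contains a triangle (placing the vertex in a $K_4$) or is a spanning $4$-cycle (so the vertex is the apex of a wheel $W_4$), and each case again forces degree conditions on the neighbors.

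To finish the proof, I would assign to each vertex an initial charge equal to its degree and redistribute charge from higher-degree to lower-degree neighbors along the $K_4$ and wheel structures just identified, aiming at a final charge of at least $\frac{22}{5}$ per vertex (up to a uniform additive correction of $\frac{36}{5}$ for boundary cases). Equivalently, one sets up a linear program whose variables count vertices in each local ``type'' and whose constraints encode the structural observations above, and verifies that its optimum is at least $\frac{11}{5}n-\frac{18}{5}$. The main obstacle, as I see it, is the calibration of the LP: the basic package (minimum degree $3$ together with the degree-$3$ partner inequality) alone yields only $m\ge\frac{9}{5}n$, so sharper local information for degree-$4$ vertices (and possibly degree-$5$)---tracking the wheel or $K_4$ structure in which each low-degree vertex sits, overlap patterns of adjacent $K_4$s, or how many degree-$3$ vertices may share a common neighbor---will be needed, and checking that the refined polyhedron has optimum exactly $\frac{11}{5}n-\frac{18}{5}$, probably via LP duality or an explicit discharging weighting, is where the real work lies.
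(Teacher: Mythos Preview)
Your overall strategy---minimal counterexample, local structure from ``every neighborhood contains a cycle'', then an LP/discharging computation---matches the paper's, but you are missing the single idea that makes the LP close, and you correctly sense this when you say the basic package only gives $m\ge\tfrac{9}{5}n$.

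The missing step is that the minimal counterexample is in fact \emph{$4$-connected}, not merely $3$-connected. If $S$ is a $3$-cut, it cannot be a forest, so $G[S]\cong K_3$; then $G=G_1\cup G_2$ with $V(G_1)\cap V(G_2)=S$, each $G_i$ is $3$-connected and inherits ``no forest cut'' (a forest cut of $G_i$ is a forest cut of $G$), so by minimality $m(G_i)\ge\tfrac{11}{5}n_i-\tfrac{18}{5}$, and summing with $n=n_1+n_2-3$ and three shared edges gives $m(G)\ge\tfrac{11}{5}n-\tfrac{18}{5}$, a contradiction. This is where minimality is actually used; your proposal only invokes it implicitly and never exploits it to split along a triangle. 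Once you have $4$-connectivity, there are no degree-$3$ vertices at all, and your whole block of $K_4$-around-degree-$3$ analysis becomes irrelevant rather than something to be sharpened.

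With $\delta(G)\ge 4$ the paper then proves, by short case analyses using $4$-connectivity and $n\ge 8$, three local facts: a degree-$4$ vertex has at most two degree-$4$ neighbours; a degree-$5$ vertex is not surrounded by degree-$4$ vertices; and for every degree-$4$ vertex $u$ one has $\sum_{v\in N(u)}d(v)\ge 19$ (handled separately according to whether $G[N(u)]$ contains a triangle or is a $4$-cycle). These inequalities, stratifying degree-$4$ vertices by the maximum degree in their neighbourhood, feed an LP whose dual has a feasible point with value $\tfrac{11}{5}n$, finishing the proof. So the ``sharper local information'' you anticipate is exactly the $d(N(u))\ge 19$ bound, but it only becomes provable after you have ruled out $3$-cuts.
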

Before we proceed to the proofs in Section \ref{section2},
we mention some related research.
Independent cuts were first studied by Tucker~\cite{tu}.
Answering a question posed by Corneil and Fonlupt~\cite{cofo}
and using a result of Chv\'{a}tal~\cite{ch},
Klein and de Figueiredo showed that 
deciding the existence of an independent cut is NP-complete.
This initial hardness result was strengthened in different ways
and also some tractable cases were identified~\cite{BrDrLeSz,LeRa,le_mosca_mueller_2008}.
The parameterized complexity was studied in~\cite{MaOsRa,raraso,krle}.
Also small independent cuts~\cite{MR1936948} 
and cuts inducing graphs of small maximum degree~\cite{beraraso}
were studied.

The hardness of deciding the existence of a forest cut 
follows immediately from the following simple observation:
If a graph $G$ arises from some graph $H$ by adding a universal vertex,
then $G$ has a forest cut if and only if $H$ has an independent cut.

\section{Proofs}\label{section2}

We briefly collect some notation.
Let $G$ be a graph and let $S$ be a set of vertices of $G$.
The subgraph of $G$ induced by $S$ is denoted by $G[S]$.
The set $S$ is a vertex cut of $G$ if $G-S$ is disconnected.
A vertex cut $S$ of $G$ is an {\it independent cut} if $S$ is independent,
that is, the graph $G[S]$ has no edges.
A vertex cut $S$ of $G$ is a {\it forest cut} if $G[S]$ is a forest.
A vertex of a graph $G$ is {\it universal} in $G$
if it is adjacent to all other vertices of $G$.

\begin{proof}[Proof of Proposition \ref{proposition1}]
In order to prove Conjecture \ref{conjecture1} for planar graphs
it suffices to show that $G-xy$ has a forest cut
where $G$ is a maximal planar graph and $xy$ is some edge of $G$.
Let $G$ be embedded in the plane such that 
$xyzx$ is the boundary of the unbounded face.
If $x$ and $y$ are the only neighbors of $z$,
then $G$ is the triangle $xyzx$ and $\{ z\}$ is a forest cut of $G-xy$.
Now, let $z$ have more than two neighbors.
Let $x,u_1,u_2,\ldots,u_k,y$ be the neighbors of $z$ 
as they appear in cyclic order around $z$ within the embedding.
Since $G$ is a triangulation, 
it follows that $P:xu_1u_2\ldots u_ky$ is a path in $G-xy$.
Let $Q:xu_{i_1}u_{i_2}\ldots u_{i_\ell}y$ be a path in $G[V(P)]-xy$
such that $i_1<i_2<\ldots<i_\ell$ and $\ell$ is minimum.
If the embedding of $G$ has vertices in the interior of the cycle $Q+xy$,
then $V(Q)$ is a forest cut of $G-xy$;
otherwise $\{ z,u_{i_1}\}$ is a forest cut of $G-xy$.

For the second part of the statement,
let $G$ again be a maximal planar graph
and fix a plane embedding of $G$, 
which is a plane triangulation.
Suppose, for a contradiction, that $G$ has a forest cut $S$.
By choosing $S$ inclusionwise minimal, 
we may assume that every vertex in $S$ 
has neighbors in different components of $G-S$.
Let $u$ be a vertex of degree at most one in $G[S]$.
There are two neighbors $v$ and $w$ of $u$ in $G$
that belong to different components of $G-S$
such that the two edges $uv$ and $uw$ 
are embedded in a cyclically consecutive way at $u$.
Since the embedding is a plane triangulation,
the vertices $v$ and $w$ are adjacent,
contradicting that they belong to different components of $G-S$.
\end{proof}

\begin{proof}[Proof of Theorem \ref{theorem2}]
Suppose, for a contradiction, that $G$ is a counterexample of minimum order,
that is, the graph $G$ of order $n$ has less than $\frac{11}{5}n-\frac{18}{5}$ edges
but no forest cut.
Since the statement is trivial for $n\leq 3$, it follows that $n\geq 4$.
Since every vertex cut of order at most $2$ is a forest cut, 
it follows that $G$ is $3$-connected.
For $n\in \{ 4,5\}$, 
there are no $3$-connected graphs with less than 
$\frac{11}{5}n-\frac{18}{5}\in \{ 5.2,7.4\}$ edges.
For $n\in \{ 6,7\}$,
all $3$-connected graphs with less than 
$\frac{11}{5}n-\frac{18}{5}\in \{ 9.6,11.8\}$ edges 
are shown in Figure \ref{fig:n67};
all these graphs have forest cuts.
We generated these four graphs using \cite{codhgo}.

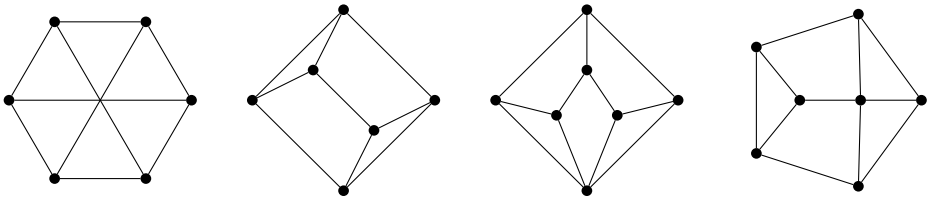
\begin{figure}[H]
\centering
\begin{tikzpicture}[scale=0.8]
\begin{scope}[shift={(0,0)}]
\foreach \v in {1, ..., 6} {
	\node[dot] (\v) at (-\v*60:1.5) {};
};
\draw (1) to (2) to (3) to (4) to (5) to (6) to (1);
\draw (1) to (4);
\draw (2) to (5);
\draw (3) to (6);
\end{scope}
\begin{scope}[shift={(4,0)}]
\node[dot] (1) at (0,1.5) {};
\node[dot] (2) at (1.5,0) {};
\node[dot] (3) at (0,-1.5) {};
\node[dot] (4) at (-1.5,0) {};
\node[dot] (5) at (-0.5,0.5) {};
\node[dot] (6) at (0.5,-0.5) {};
\draw (1) to (2) to (3) to (4) to (1);
\draw (1) to (5) to (4);
\draw (2) to (6) to (3);
\draw (5) to (6);
\end{scope}
\begin{scope}[shift={(8,0)}]
\node[dot] (1) at (0,1.5) {};
\node[dot] (2) at (1.5,0) {};
\node[dot] (3) at (0,-1.5) {};
\node[dot] (4) at (-1.5,0) {};
\node[dot] (5) at (0,0.5) {};
\node[dot] (6) at (-0.5,-0.25) {};
\node[dot] (7) at (0.5,-0.25) {};
\draw (1) to (2) to (3) to (4) to (1);
\draw (3) to (6) to (4);
\draw (3) to (7) to (2);
\draw (1) to (5) to (6);
\draw (5) to (7);
\end{scope}
\begin{scope}[shift={(12,0)}]
\foreach \v in {1, ..., 5} {
	\node[dot] (\v) at (-\v*72:1.5) {};
};
\node[dot] (6) at (-0.5,0) {};
\node[dot] (7) at (0.5,0) {};
\draw (1) to (2) to (3) to (4) to (5) to (1);
\draw (1) to (7) to (5);
\draw (4) to (7) to (6) to (3);
\draw (2) to (6);
\end{scope}
\end{tikzpicture}
\caption{All 3-connected graphs on $n \in \{6, 7\}$ vertices 
with less than $\frac{11}{5}n-\frac{18}{5}$ edges.}\label{fig:n67}
\end{figure}

It follows that $n\geq 8$.

\begin{claim}\label{claim4conn}
The graph $G$ is $4$-connected.
\end{claim}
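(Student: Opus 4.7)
My plan is to assume, towards a contradiction, that $G$ is not $4$-connected and hence has a vertex cut $S=\{x,y,z\}$ of size exactly three. The key observation is that on three vertices $K_3$ is the only non-forest graph: if $G[S]$ is missing any edge, then $G[S]$ is a forest and $S$ itself is a forest cut of $G$, contradicting our assumption. I may therefore assume $G[S]=K_3$. Let $C_1,\dots,C_k$ with $k\geq 2$ be the components of $G-S$; note that by $3$-connectivity each vertex of $S$ has a neighbor in every $C_i$, since otherwise the other two vertices of $S$ would form a $2$-cut.

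For each $i$, set $G_i=G[V(C_i)\cup S]$ with $n_i=|V(G_i)|$ and $m_i=|E(G_i)|$. A straightforward double count, correcting for the fact that $S$ and its three triangle edges are counted $k$ times, gives $\sum_i n_i=n+3(k-1)$ and $\sum_i m_i=|E(G)|+3(k-1)$. Summing the threshold produces $\sum_i\left(\tfrac{11}{5}n_i-\tfrac{18}{5}\right)=\tfrac{11}{5}n-\tfrac{18}{5}+3(k-1)$, which strictly exceeds $\sum_i m_i$ by the edge hypothesis on $G$. Consequently at least one $G_i$, say $G_1$, satisfies $m_1<\tfrac{11}{5}n_1-\tfrac{18}{5}$. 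Since each $C_j$ with $j\neq 1$ contributes at least one vertex, $n_1\leq n-1<n$; the minimality of $G$ as a counterexample then supplies a forest cut $F$ of $G_1$.

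The remaining and most delicate step is to lift $F$ to a forest cut of $G$. Since $G[F]=G_1[F]$ is a forest while $G[S]=K_3$ is not, we cannot have $S\subseteq F$, so $S\setminus F$ is nonempty; its vertices are pairwise adjacent in $G$ and therefore lie in a single component $D_1$ of $G_1-F$. Combining $D_1$ with all $V(C_j)$, $j\neq 1$ (each reachable from $S\setminus F$ via the guaranteed edges) yields one ``big'' component of $G-F$. Any other component $D$ of $G_1-F$ is disjoint from $S\setminus F$ and hence sits inside $V(C_1)$; since $V(C_1)$ has no edges in $G$ to any $V(C_j)$ with $j\neq 1$, and any edge from $D$ to $S\setminus F\subseteq D_1$ would contradict $D$ and $D_1$ being distinct components of $G_1-F$, the vertex set $D$ remains a component of $G-F$. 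Thus $F$ is a forest cut of $G$, a contradiction, and $G$ is $4$-connected. The main obstacle is precisely this lifting step: it hinges on the fact that $G[S]=K_3$ forces $F$ to miss at least one vertex of $S$, which in turn consolidates the far side of the original cut into a single component of $G-F$.
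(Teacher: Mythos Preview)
Your proof is correct and follows essentially the same strategy as the paper: observe that a $3$-cut $S$ must induce a triangle, split $G$ along $S$, and use minimality to produce a forest cut in a piece that then lifts to $G$. The paper phrases this contrapositively (neither piece can have a forest cut, so both have at least $\tfrac{11}{5}n_i-\tfrac{18}{5}$ edges, forcing $m(G)\geq\tfrac{11}{5}n-\tfrac{18}{5}$) and works with exactly two pieces, while you handle general $k\geq 2$ and spell out the lifting step the paper leaves implicit; but the underlying argument is the same.
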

\begin{proof}[Proof of Claim \ref{claim4conn}]
Suppose, for a contradiction, that $G$ has a vertex cut $S$ of order $3$.
Since $G$ has no forest cut, the set $S$ induces a triangle.
Let $G_1$ and $G_2$ be 3-connected proper subgraphs of $G$ of orders $n_1$ and $n_2$, respectively,
such that $G_1\cup G_2=G$ and $V(G_1)\cap V(G_2)=S$.
Since a forest cut of $G_1$ or $G_2$ is also a forest cut of $G$,
it follows that neither $G_1$ nor $G_2$ has a forest cut and 
the choice of $G$ implies that each $G_i$ has at least $\frac{11}{5}n_i-\frac{18}{5}$ edges.
Since $n=n_1+n_2-3$ and $G_1$ and $G_2$ share exactly three edges, 
we obtain the contradiction
\begin{eqnarray*}
m(G) & = & m(G_1)+m(G_2)-3
\geq \left(\frac{11}{5}n_1-\frac{18}{5}\right)+\left(\frac{11}{5}n_2-\frac{18}{5}\right)-3
= \frac{11}{5}n-\frac{18}{5},
\end{eqnarray*}
which completes the proof of the claim.
\end{proof}

\begin{claim}\label{claim4}
If $u$ is a vertex of $G$ of degree $4$,
then at most two neighbors of $u$ have degree $4$.
\end{claim}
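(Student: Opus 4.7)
The plan is a proof by contradiction: assume that $u$ has three neighbors $v_1,v_2,v_3$ of degree~$4$ with fourth neighbor $v_4$, and exhibit a forest cut. Since $G$ is $4$-connected (Claim~\ref{claim4conn}) and $n\geq 8$, the set $N(u)$ is a vertex cut, so $G[N(u)]$ must contain a cycle; on four vertices this cycle is either a triangle or a chord-free $C_4$. For each $i\in\{1,2,3\}$, set $d_i=|N(v_i)\cap N(u)|$ and $S_i=N(\{u,v_i\})\setminus\{u,v_i\}$. Then $|S_i|=6-d_i$ and $S_i$ is a vertex cut of $G$, so $4$-connectivity forces $d_i\leq 2$; equivalently, each $v_i$ with $d_i=2$ has a unique external neighbor $w_i\notin\{u\}\cup N(u)$. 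The forest cut in each case will be $S_i$ for an appropriate choice of $i$.

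I split according to the cycle in $G[N(u)]$. First, if the cycle is the triangle on $\{v_1,v_2,v_3\}$, then $d_i=2$ for each $i\in\{1,2,3\}$ and $v_4$ is not adjacent to any $v_j$ (otherwise some $d_j=3$ forces $S_j$ to have size $3$, a contradiction). The only forced edge of $G[S_1]=G[\{v_2,v_3,v_4,w_1\}]$ is then $v_2v_3$, and any cycle forces both $v_2w_1$ and $v_3w_1$, i.e., $w_1=w_2=w_3$; but then $N(\{u,v_1,v_2,v_3\})=\{v_4,w_1\}$ is a $2$-cut, contradicting $4$-connectivity, so $S_1$ is a forest cut. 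Second, if the triangle includes $v_4$, WLOG $\{v_1,v_2,v_4\}$, then $4$-connectivity applied to $\{u,v_1,v_2\}$ gives $w_1\neq w_2$ and hence $v_2w_1\notin E$; the only forced edge of $G[S_1]$ is $v_2v_4$, and adding any combination of the remaining possible edges $v_3w_1,v_4w_1$ (with $v_2v_3$ and $v_3v_4$ non-edges, else some $d_j=3$) produces at most a path, so $S_1$ is again a forest cut. Third, if $G[N(u)]$ is a chord-free $4$-cycle $v_1v_2v_3v_4$, each $S_i$ ($i\in\{1,2,3\}$) inherits a two-edge path from the $C_4$ plus the pendant $w_i$; for $S_i$ not to be a forest one needs at least two of the edges $v_{i-1}w_i,v_{i+1}w_i,v_4w_i$ (indices modulo~$4$), i.e., at least two among $\{w_i=w_{i-1},w_i=w_{i+1},w_i\in N(v_4)\}$. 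Requiring this simultaneously for every $i\in\{1,2,3\}$ forces either $w_1=w_2=w_3$ (so that $N(\{u,v_1,v_3\})=\{v_2,v_4,w_1\}$ is a $3$-cut) or another identification yielding a cut of size at most $3$, contradicting $4$-connectivity in every subcase.

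Any configuration with $|E(G[N(u)])|\geq 4$ beyond a chord-free $C_4$ reduces to the above: the extra edges either reproduce one of the three cycle types or force $d_j=3$ for some $j\in\{1,2,3\}$, immediately yielding a $3$-cut $S_j$. The main obstacle is the $C_4$ case, since preventing all three of $S_1,S_2,S_3$ from being forest cuts simultaneously requires tightly interlocking coincidences among the external neighbors $w_1,w_2,w_3$ and $N(v_4)$, and the delicate bookkeeping is to verify that every such configuration forces a cut of size at most $3$ through an appropriate enlargement $T\subseteq\{u\}\cup N(u)\cup\{w_1,w_2,w_3\}$.
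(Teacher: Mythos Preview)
Your overall strategy mirrors the paper's: analyse the cycle contained in $G[N(u)]$ and look for forest cuts of the form $S_i=N(\{u,v_i\})\setminus\{u,v_i\}$. The triangle-on-$\{v_1,v_2,v_3\}$ case and the chord-free $C_4$ case are essentially fine (modulo a harmless indexing slip in the latter: for $i=1$ the three candidate edges from $w_1$ are $v_2w_1,v_3w_1,v_4w_1$, not ``$v_{i-1}w_i,v_{i+1}w_i,v_4w_i$'').

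There is, however, a genuine gap in your Case~2. You assert that $v_3v_4\notin E(G)$ because ``else some $d_j=3$''. But if $v_3v_4$ is an edge then, having already shown $v_1v_3,v_2v_3\notin E$, you only get $d_3=1$ and $d_4=3$; the latter does \emph{not} produce a $3$-cut, since $v_4$ is the neighbor of unrestricted degree and $|S_4|=\deg(v_4)-1$ may well be $\ge 4$. Your final paragraph does not rescue this either: the ``paw'' configuration (triangle $v_1v_2v_4$ together with the edge $v_3v_4$) has $d_1=d_2=2$ and $d_3=1$, so no $d_j=3$ with $j\in\{1,2,3\}$, and the only cycle it contains is the triangle $v_1v_2v_4$, which sends you straight back to Case~2. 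Concretely, if additionally $v_3w_1,v_4w_1\in E(G)$, then $G[S_1]$ contains the triangle $v_3v_4w_1$ and $S_1$ is not a forest cut; nothing you have written excludes this.

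The paper closes this gap as follows. Using Hall's theorem (via $4$-connectivity and $n\ge 8$) it first secures \emph{three} pairwise distinct external neighbours $w_1,w_2,w_3$ of $v_1,v_2,v_3$. Then, assuming $S_1$ is not a forest cut---which, as above, forces the triangle $w_1v_3v_4$ and hence $N(v_3)=\{u,v_4,w_1,w_3\}$---it turns to $S_2=\{v_1,v_3,v_4,w_2\}$: here $v_1$ is adjacent only to $v_4$, and $v_3$ is adjacent only to $v_4$ (crucially $v_3w_2\notin E$ because $w_2\neq w_1,w_3$), so $G[S_2]$ is a star centred at $v_4$ and is the desired forest cut.
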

\begin{proof}[Proof of Claim \ref{claim4}]
Suppose, for a contradiction, 
that a vertex $u$ of degree $4$ has three neighbors $a$, $b$, and $c$ of degree $4$.
Let $d$ be the fourth neighbor of $u$.
Since $G$ is $4$-connected and $n\geq 8$,
Hall's theorem implies that 
the vertices $a$, $b$, and $c$
have distinct neighbors $a'$, $b'$, and $c'$, respectively,
outside of $N_G[u]$.

First, suppose that $abca$ is a triangle in $G$.
In this case, $\{ a',b,c,d\}$ is a forest cut, which is a contradiction.
Next, suppose that $abda$ is a triangle in $G$.
Since $\{ a',b,c,d\}$ is not a forest cut, 
it follows that $a'cda'$ is a triangle.
Note that $c$ is not adjacent to $b'$.
Now, the set $\{ a,b',c,d\}$ is a forest cut, which is a contradiction.
Similarly, it follows that $acda$ and $bcdb$ are not triangles in $G$.
Since $\{ a,b,c,d\}$ is not a forest cut, 
we may assume that $abcda$ is a cycle in $G$.
This implies that $\{ a',b,c,d\}$ is a forest cut, which is a contradiction
and completes the proof of the claim.
\end{proof}

\begin{claim}\label{claim3}
If $u$ is a vertex of $G$ of degree $5$,
then not all neighbors of $u$ have degree $4$.
\end{claim}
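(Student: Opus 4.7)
Suppose for contradiction that $u$ has degree $5$ and every neighbor $v_i\in N(u)=\{v_1,\dots,v_5\}$ has degree $4$, and set $H=G[N(u)]$, so that $d_H(v_i)+d^{\mathrm{ext}}(v_i)=3$ for every $i$, where $d^{\mathrm{ext}}(v_i)$ counts the neighbors of $v_i$ outside $N[u]$. Because $n\ge 8$ and $u$ is not universal, $N(u)$ is a vertex cut of $G$: its removal isolates $u$. Consequently, if $H$ were a forest, $N(u)$ itself would already be a forest cut, a contradiction. Hence I may assume $H$ contains a cycle, and the whole plan consists in producing a forest cut by a small local modification of $N(u)$, following the substitution style of the proof of Claim~\ref{claim4}.

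Two candidate cuts arise naturally. First, if some $v_i$ satisfies $d_H(v_i)=3$, then $v_i$ has no external neighbor and $T_i:=\{u\}\cup N_H(v_i)$ is a vertex cut of $G$ (it isolates $v_i$); the induced graph $G[T_i]$ is a star $K_{1,3}$, hence a forest, \emph{exactly} when $N_H(v_i)$ is independent in $H$. Second, if $d_H(v_i)=2$, then $v_i$ has a unique external neighbor $w_i$ and $S_i:=(N(u)\setminus\{v_i\})\cup\{w_i\}$ is a vertex cut: after its removal the only remaining neighbor of $v_i$ is $u$ and, because $n\ge 8$, at least one further external vertex is left outside $S_i$, so $\{u,v_i\}$ becomes a proper component of $G-S_i$. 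The induced graph $G[S_i]$ consists of the edges of $H-v_i$ together with the edges from $w_i$ to $N(u)\setminus\{v_i\}$, so it is a forest whenever $v_i$ lies on every cycle of $H$ and few of the remaining $v_j$'s are adjacent to $w_i$.

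I would then case-analyse on $H$. If some $v_i$ has $d_H(v_i)=3$ and an independent $H$-neighborhood, $T_i$ finishes. Otherwise a short edge-count on the $5$-vertex graph $H$ (of maximum degree at most $3$) leaves only a handful of possible shapes---essentially, $H$ contains a single $C_3$, $C_4$, or $C_5$ with at most some leftover paths and isolated vertices, or $H$ has a triangle through a saturated vertex---and in each shape I would pick a cycle-vertex $v_i$ with $d_H(v_i)=2$ and verify that $S_i$ is a forest cut. The control on how often an external neighbor $w$ is shared by several $v_j$'s comes from a Hall-type argument in the spirit of Claim~\ref{claim4}: if $w$ is adjacent to $k$ vertices of $N(u)$, a separator of size at most $7-k$ can be read off, and the $4$-connectivity established in Claim~\ref{claim4conn} then caps $k$.

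The main obstacle I anticipate is the subcase $H=C_5$ combined with heavy sharing of external neighbors. There $H-v_i$ already contributes three edges on four vertices, so even a single edge from $w_i$ to a non-consecutive $v_j$ can close a new cycle in $G[S_i]$. I expect to resolve this by a double substitution, in which two cycle-consecutive vertices of $N(u)$ are replaced simultaneously by their external neighbors, or else by extracting a small separator directly from the sharing pattern to contradict $4$-connectivity. Once every configuration has been checked, at least one of the candidate sets $T_i$ or $S_i$ produces a forest cut and yields the desired contradiction.
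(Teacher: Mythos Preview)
Your plan is headed the right way but remains a sketch, and it misses the two simplifications that let the paper finish cleanly. First, the case $d_H(v_i)=3$ never occurs: if some $v_i$ had all its neighbors in $N_G[u]$, then $N'=\{v_j:N_G(v_j)\not\subseteq N_G[u]\}$ would be a vertex cut of size exactly $4$, and since $v_i$ is adjacent to three of its members, those three have at most one neighbor each inside $N'$; a degree count gives at most two edges in $G[N']$, so $G[N']$ is a forest --- already a contradiction. Hence every $v_i$ has an external neighbor, $H$ has maximum degree $2$, and your $T_i$-cuts are vacuous.

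Second --- and this is the real difference --- the paper does not substitute a single $v_i$ by its external neighbor. It indexes by the external vertex: for each $x$ adjacent to the unique cycle $C\subseteq H$ it takes
\[
S_x=\{x\}\cup\bigl(V(C)\setminus N_G(x)\bigr)\cup\bigl(N(u)\setminus V(C)\bigr).
\]
In $G[S_x]$ the piece $V(C)\setminus N_G(x)$ is a proper subgraph of a cycle, hence a forest, and it has no edges to the rest of $S_x$; so any cycle in $G[S_x]$ must lie inside $\{x\}\cup(N(u)\setminus V(C))$, a set of at most three vertices. This instantly disposes of $|C|\in\{4,5\}$ --- precisely the case you flagged as the main obstacle --- and forces $|C|=3$ with $N(u)\setminus V(C)$ an adjacent pair both joined to every such $x$, after which a short count on the external set $X$ and on $n$ finishes. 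Your $S_i$ keeps $w_i$ connected to the surviving path $H-v_i$, which is exactly why $C_5$ looks hard to you; removing \emph{all} of $x$'s cycle-neighbors at once is what dissolves the difficulty. (Incidentally, your separator of size ``$7-k$'' should be $6-k$, namely $\{w\}\cup(N(u)\setminus N_G(w))$; the resulting sharper bound $k\le 2$ is what your own $S_i$ would need to succeed in the $C_5$ case.)
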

\begin{proof}[Proof of Claim \ref{claim3}]
Suppose, for a contradiction, that all neighbors of $u$ have degree $4$.
Let $N'$ be the set of neighbors $v$ of $u$ with $N_G(v)\not\subseteq N_G[u]$.
Note that $N'$ is a vertex cut of $G$.
Let $N''$ be the set of vertices that lie on a cycle in $G[N']$.
Since every vertex in $N'$ has degree $4$ and is adjacent to $u$ 
as well as to some vertex outside of $N_G[u]$,
the graph $G[N'']$ is the union of disjoint cycles
and there is no edge between $N''$ and $N'\setminus N''$.
Since $|N''|\leq 5$, it follows that $G[N'']$ consists of a single cycle $C$.
Let $X$ be the set of neighbors of vertices on $C$ outside of $N_G[u]$.
For $x\in X$,
let $S_x=\{ x\}\cup (N''\setminus N_G(x))\cup (N'\setminus N'')$.
Since $S_x$ is not a forest cut of $G$,
the structure of $G$ implies 
that $\{ x\}\cup (N'\setminus N'')$ contains a cycle.
Since $|N'\setminus N''|\leq 2$,
it follows that $N'\setminus N''$ consists of two adjacent vertices,
$n(C)=3$, and $x$ is adjacent to every vertex in $N'\setminus N''$,
that is, the set $\{ x\}\cup (N'\setminus N'')$ induces a triangle.
By symmetry, we obtain that 
every vertex in $X$ is adjacent to every vertex in $N'\setminus N''$.
Since the vertices in $N'\setminus N''$ have degree $4$,
this implies $|X|\leq 2$.
If $|X|<2$, 
then $n\geq 8$ implies that $X\cup (N'\setminus N'')$ is a vertex cut of $G$,
contradicting the fact that $G$ is $4$-connected.
Hence, we obtain that $|X|=2$.
If $n>8$, 
then $X$ is a vertex cut of $G$,
again contradicting the fact that $G$ is $4$-connected.
Hence, we obtain that $n=8$.
Now, the set $\{ u\}\cup X$ is a forest cut of $G$,
which is a contradiction 
and completes the proof of the claim.
\end{proof}
For a set $S$ of vertices, let $d_G(S)=\sum\limits_{u\in S}d_G(u)$.

\begin{claim}\label{lem:c3}
If $u$ is a vertex of $G$ of degree $4$ and $N_G(u)$ contains a triangle,
then $d_G(N_G(u)) \geq 19$.
\end{claim}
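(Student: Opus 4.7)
The plan is to assume for contradiction that $d_G(N_G(u)) \leq 18$. Write $N_G(u) = \{a,b,c,d\}$ with $abc$ the triangle. Claim \ref{claim4} bounds the number of degree-$4$ neighbors of $u$ by two, and Claim \ref{claim4conn} implies minimum degree at least $4$, so the multiset of degrees within $N_G(u)$ is forced to be exactly $\{4,4,5,5\}$.

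Before splitting into cases, I would record a preparatory consequence of 4-connectivity. If a degree-$4$ vertex $v \in \{a,b,c\}$ has its fourth neighbor (beyond $u$ and the two other triangle vertices) equal to $d$, then $N(v) \subseteq N_G[u]$, and removing the three vertices of $\{a,b,c,d\} \setminus \{v\}$ isolates $\{u,v\}$ from the rest of $G$ (which is non-empty since $n \geq 8$), giving a cut of size $3$ and contradicting Claim \ref{claim4conn}. Hence the fourth neighbor of any degree-$4$ vertex in the triangle lies in $X := V(G)\setminus N_G[u]$; analogously, if $d$ has degree $4$ then $|\{a,b,c\} \cap N(d)| \leq 2$.

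\textbf{Case A: both degree-$4$ vertices lie in the triangle}, say $a$ and $b$, with fourth neighbors $a',b'\in X$. First, $a' \neq b'$, because if $a'=b'=:x$, then $\{x,c,d\}$ isolates $\{u,a,b\}$, a size-$3$ cut. Now consider the two candidate sets $S_1 := \{a',b,c,d\}$ and $S_2 := \{a,b',c,d\}$: each is a vertex cut of size $4$, isolating $\{u,a\}$ or $\{u,b\}$ respectively, since $N(a) = \{u,b,c,a'\}$ and $N(b) = \{u,a,c,b'\}$. Because $b$ is degree $4$, its only edge inside $S_1$ is $bc$, so any cycle in $G[S_1]$ lies in $\{a',c,d\}$ and must be the triangle $a'cd$; symmetrically, $G[S_2]$ is acyclic unless $b'cd$ is a triangle. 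If both cycles exist, then $c$ is adjacent to $\{u,a,b,d,a',b'\}$, giving $d_G(c) \geq 6$, contradicting $d_G(c) = 5$. So one of $S_1, S_2$ is a forest cut.

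\textbf{Case B: one degree-$4$ vertex in the triangle}, say $a$, and $d$ also has degree $4$. Let $a' \in X$ and set $t := |\{a,b,c\} \cap N(d)|$; by the preparatory step, $ad \notin E$ and $t \leq 2$, so $t = [bd]+[cd]$. I would examine the candidate $\{a', b, c, d\}$ (which isolates $\{u,a\}$) together with replacements of $d$ by an outside neighbor $d' \in X \cap N(d)$ (giving $\{a',b,c,d'\}$, which isolates $\{u, a, d\}$ when $d$'s other neighbors among $\{b,c\}$ are cut), and use the degree-$5$ bounds on $b$ and $c$ to argue that the triangles destroying the forest property of one candidate cannot coexist with those destroying another without forcing $d_G(b) \geq 6$ or $d_G(c) \geq 6$.

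The main obstacle is Case B: unlike the clean symmetric pairing in Case A, the asymmetric role of $d$ forces a finer subdivision by $t$ and by the adjacencies of $a'$ and of $d$'s outside neighbors to $\{b,c\}$. The heart of the work will be selecting the right small family of size-$4$ candidate cuts that collectively cover all configurations, so that at least one must induce a forest and contradict the choice of $G$.
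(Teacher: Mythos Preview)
Your Case A is correct and essentially identical to the paper's treatment of the subcase where both degree-$4$ neighbors lie in the triangle.

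Your Case B, however, is only a sketch, and the plan as stated does not close. The concrete obstruction is the configuration where $a'$ is adjacent to both $b$ and $c$: then the triangle $a'bc$ sits inside \emph{both} of your proposed candidate cuts $\{a',b,c,d\}$ and $\{a',b,c,d'\}$, so neither is a forest. There is no pair of \emph{distinct} obstruction triangles here to play off against the degree-$5$ bounds on $b$ and $c$; a single triangle kills your whole family simultaneously. The paper resolves this subcase by introducing the fifth neighbors $b',c'$ of $b,c$ outside $N_G[u]\cup\{a'\}$ (which exist by $4$-connectivity) and examining $\{a',b',c,d\}$ and $\{a',b,c',d\}$; forcing cycles in both yields triangles $a'b'd$ and $a'c'd$, and then $\{a',b',c'\}$ is a $3$-cut.

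More generally, the paper organizes Case B by asking which cycle the non-forest cut $\{a',b,c,d\}$ contains. The four possible triangles $bcd$, $a'cd$, $a'bd$, $a'bc$ are dispatched separately (the middle two immediately give $3$-cuts $\{a',b,d\}$ or $\{a',c,d\}$ since they consume the fifth edge at $c$ or $b$), and a final case handles the $4$-cycle $a'bcda'$ via the replacement cut $\{a',b,c,d'\}$. Your two candidates cover only some of these subcases; you need a larger family that also allows replacing $b$ or $c$, not just $d$.
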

\begin{proof}[Proof of Claim \ref{lem:c3}]
Let $N_G(u)=\{ a,b,c,d\}$ and let $abca$ be a triangle in $G$.
If $u$ has only one neighbor of degree $4$, then $d_G(N_G(u)) \geq 4+5+5+5=19$.
If $u$ has a neighbor of degree at least $6$, then, by Claim \ref{claim4},
we obtain $d_G(N_G(u)) \geq 4+4+5+6=19$.
Hence, we may assume that two neighbors of $u$ have degree $4$
and the remaining two neighbors of $u$ have degree $5$.

First, suppose that $a$ and $b$ are the two neighbors of $u$ of degree $4$.
Since $G$ is $4$-connected, 
it follows that the vertices $a$ and $b$ have distinct neighbors $a'$ and $b'$,
respectively, outside of $N_G[u]$.
Since $\{ a',b,c,d\}$ is not a forest cut, 
it follows that $a'cda'$ is a triangle.
Since the vertex $c$ has degree $5$,
it follows that $c$ is not adjacent to $b'$.
Now, the set $\{ a,b',c,d\}$ is a forest cut, 
which is a contradiction.
Hence, the triangle $abca$ contains only one vertex of degree $4$.
By symmetry, 
we may assume that $a$ and $d$ are the two neighbors of $u$ of degree $4$.
Since $G$ is $4$-connected, 
it follows that the vertex $a$ has a neighbor $a'$ outside of $N_G[u]$.

We consider some cases.

\medskip

\noindent {\bf Case 1} {\it $bd,cd\in E(G)$.}

\medskip

\noindent Let $d'$ be the neighbor of $d$ distinct from $u$, $b$, and $c$.
Since $G$ is $4$-connected and $n \geq 8$, it follows that $a'$ and $d'$ are distinct.
Since $\{ a',b,c,d'\}$ is not a forest cut, 
the set $\{ a',b,c,d'\}$ contains a cycle of length $4$.
Since $b$ and $c$ have degree $5$, these two vertices have no neighbor 
outside of $N_G[u]\cup \{ a',d'\}$.
Now, the set $\{ a',d'\}$ is a vertex cut, which is a contradiction.

\medskip

\noindent {\bf Case 2} {\it $a'c,cd\in E(G)$.}

\medskip

\noindent Since the vertex $c$ has degree $5$,
it follows that $\{ a',b,d\}$ is a vertex cut,
which contradicts Claim \ref{claim4conn}.

\medskip

\noindent {\bf Case 3} {\it $a'b,bd\in E(G)$.}

\medskip

\noindent This case is symmetric to Case 2.

\medskip

\noindent {\bf Case 4} {\it $a'b,a'c\in E(G)$.}

\medskip

\noindent Since $G$ is $4$-connected and $n\geq 8$,
it follows that $b$ and $c$ have distinct neighbors $b'$ and $c'$,
respectively, outside of $N_G[u]\cup \{ a'\}$.
Since $\{ a',b',c,d\}$ is not a forest cut, 
it follows that $a'b'da'$ is a triangle.
Since $\{ a',b,c',d\}$ is not a forest cut, 
it follows that $a'c'da'$ is a triangle.
Since $G$ has more than $14$ edges, it follows that $n>8$.
Now, the set $\{ a',b',c'\}$ is a vertex cut,
which is a contradiction.

\medskip

\noindent Since $\{ a',b,c,d\}$ is not a forest cut, 
it follows that this set contains a cycle.
Since all four cases led to contradictions,
it follows that $\{ a',b,c,d\}$ contains no triangle.
By symmetry, we may assume that $G[\{ a',b,c,d\}]$
is the cycle $a'bcda'$.
Let $d'$ be the neighbor of $d$ distinct from $u$, $a'$, and $c$.
Since $G$ is $4$-connected and $n\geq 8$, 
it follows that 
$b$ is not adjacent to $d'$ and that 
$c$ is not adjacent to $a'$ or $d'$.
Now, the set $\{ a',b,c,d'\}$ is a forest cut,
which is a contradiction and completes the proof.
\end{proof}

\begin{claim}\label{lem:c4}
If $u$ is a vertex of $G$ of degree $4$ and $N_G(u)$ contains no triangle,
then $d_G(N_G(u)) \geq 19$.
\end{claim}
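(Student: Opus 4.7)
The plan is to argue by contradiction and exhibit a forest cut. Suppose $d_G(N_G(u)) \leq 18$. Since every neighbour of $u$ has degree at least $4$ and by Claim~\ref{claim4} at most two of them can have degree exactly~$4$, this bound forces exactly two neighbours of degree $4$ and two of degree $5$, so $d_G(N_G(u)) = 18$. Since $n \geq 8$, the set $N_G(u)$ is itself a vertex cut of $G$, so $G[N_G(u)]$ must contain a cycle; the hypothesis that $N_G(u)$ contains no triangle forces this cycle to be a $4$-cycle, and any further edge inside $N_G(u)$ would create a triangle, so $G[N_G(u)]$ is exactly a $4$-cycle. Write $N_G(u) = \{a,b,c,d\}$ with this cycle being $abcda$.

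The key tool is a swap-cut argument. For each degree-$4$ neighbour $v$ of $u$ with unique external neighbour $v' \notin N_G[u]$, the set $S_v := (N_G(u) \setminus \{v\}) \cup \{v'\}$ is a vertex cut of $G$, because in $G - S_v$ the vertex $u$ is adjacent only to $v$ and $v$ only to $u$, while $n \geq 8$ guarantees vertices on the other side. Since $G$ has no forest cut, each $S_v$ must contain a cycle, and this constrains the edges from $v'$ into $N_G(u)$. I would then split on the two possible configurations of the degree-$4$ vertices in the $4$-cycle. In the adjacent case $\deg_G(a) = \deg_G(b) = 4$ with unique externals $a', b'$, if $a' \neq b'$ then the swap cuts $S_a$ and $S_b$ force each of $a'$ and $b'$ to be adjacent to both $c$ and $d$, pinning down $N_G(c) = \{u,b,d,a',b'\}$ and $N_G(d) = \{u,a,c,a',b'\}$; then $\{a,b,a',b'\}$ is a vertex cut whose induced subgraph is either the path $a'$-$a$-$b$-$b'$ when $a'b' \notin E(G)$ (a forest cut, contradiction), or a $4$-cycle when $a'b' \in E(G)$, in which case $\{a',b'\}$ turns out to be a vertex cut of size $2$, contradicting Claim~\ref{claim4conn}. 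In the diagonal case $\deg_G(a) = \deg_G(c) = 4$ with unique externals $a', c'$, if $a' \neq c'$ then the swap cuts force each of $a'$ and $c'$ to be adjacent to both $b$ and $d$, so that $b$ and $d$ share the same two external neighbours $\{a', c'\}$; then $\{a, c, a', c'\}$ is a vertex cut whose induced subgraph has only the edges $aa'$, $cc'$, and possibly $a'c'$, hence is always a matching or a path on four vertices, a forest cut.

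The main obstacle will be the remaining sub-cases where external neighbours of distinct degree-$4$ vertices coincide, namely $a' = b'$ in the adjacent case and $a' = c'$ in the diagonal case. In these situations the common external vertex $w$ is highly connected to $N_G(u)$ and the swap cuts alone do not pin down the structure. I would handle them by combining further swap cuts with the second external neighbours of the two degree-$5$ vertices in $N_G(u)$ and with the $4$-connectivity of $G$; the sparsity hypothesis $m(G) < \frac{11}{5}n - \frac{18}{5}$ together with Claim~\ref{claim4conn} gives a lower bound on $n$ when $w$ has high degree, which then allows the construction of an explicit forest cut (of the form $\{b, d, w, z\}$ for a suitably chosen $z$ among the externals of $b$ or $d$) or the detection of a vertex cut of size at most $3$, in each case contradicting either the no-forest-cut assumption or Claim~\ref{claim4conn}.
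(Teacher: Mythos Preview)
Your approach is essentially the paper's: reduce to the $4$-cycle $abcda$ with two degree-$4$ and two degree-$5$ neighbours, then use the swap cuts $S_v=(N_G(u)\setminus\{v\})\cup\{v'\}$ to force the external neighbours $a',b'$ (resp.\ $a',c'$) to be adjacent to both degree-$5$ vertices, and derive a contradiction. Your handling of the cases $a'\neq b'$ and $a'\neq c'$ is correct, though more roundabout than necessary: once you have pinned down $N_G(c)$ and $N_G(d)$ (adjacent case) or $N_G(b)$ and $N_G(d)$ (diagonal case), the set $\{a',b'\}$ (resp.\ $\{a',c'\}$) is already a vertex cut of size~$2$ regardless of whether $a'b'$ (resp.\ $a'c'$) is an edge, so there is no need to look at the $4$-element sets $\{a,b,a',b'\}$ or $\{a,c,a',c'\}$ first.

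The part you flag as ``the main obstacle'' is in fact not an obstacle at all, and this is the one genuine gap in your write-up. If the two degree-$4$ neighbours share their unique external neighbour $w$, then $4$-connectivity is violated immediately: in the adjacent case $N_G(a)=\{u,b,d,w\}$ and $N_G(b)=\{u,a,c,w\}$, so $\{c,d,w\}$ separates $\{u,a,b\}$ from the rest of $G$; in the diagonal case $N_G(a)=\{u,b,d,w\}$ and $N_G(c)=\{u,b,d,w\}$, so $\{b,d,w\}$ separates $\{u,a,c\}$. The paper absorbs this into the single sentence ``Since $G$ is $4$-connected, it follows that $a$ and $b$ have distinct neighbours $a'$ and $b'$'' (and analogously for $a',c'$). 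Your proposed machinery for these sub-cases --- second external neighbours of the degree-$5$ vertices, the sparsity bound $m<\tfrac{11}{5}n-\tfrac{18}{5}$, a degree-based lower bound on $n$ --- is entirely unnecessary.
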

\begin{proof}[Proof of Claim \ref{lem:c4}]
Since $N_G(u)$ is not a forest cut,
we may assume, by Claim \ref{lem:c3},
that $G[N_G(u)]$ is the cycle $abcda$.
Similarly as in the proof of Claim \ref{lem:c3},
we may assume that two neighbors of $u$ have degree $4$
and the remaining two neighbors of $u$ have degree $5$.

First, suppose that $a$ and $b$ are the two neighbors of $u$ of degree $4$.
Since $G$ is $4$-connected, 
it follows that $a$ and $b$ have distinct neighbors $a'$ and $b'$,
respectively, outside of $N_G[u]$.
Since $\{ a',b,c,d\}$ is not a forest cut,
it follows that $a'$ is adjacent to $c$ and $d$.
Since $\{ a,b',c,d\}$ is not a forest cut,
it follows that $b'$ is adjacent to $c$ and $d$.
Since $c$ and $d$ have degree $5$ and $n \geq 8$,
it follows that $\{ a',b'\}$ is a vertex cut,
which is a contradiction.
Hence, we may assume that $a$ and $c$ are the two neighbors of $u$ of degree $4$.
Since $G$ is $4$-connected, 
it follows that $a$ and $c$ have distinct neighbors $a'$ and $c'$,
respectively, outside of $N_G[u]$.
Since $\{ a',b,c,d\}$ is not a forest cut,
it follows that $a'$ is adjacent to $b$ and $d$.
Since $\{ a,b,c',d\}$ is not a forest cut,
it follows that $c'$ is adjacent to $b$ and $d$.
Since $b$ and $d$ have degree $5$,
it follows that $\{ a',c'\}$ is a vertex cut,
which is a contradiction and completes the proof of the claim.
\end{proof}

For $i \in \{4, \ldots, n-1\}$, 
let $V_i$ be the set of vertices of degree $i$.
For $j \in \{5, \ldots, n-1\}$, 
let $V_4^j$ be the set of vertices $u$ of degree $4$ 
with $j=\max\{d_G(v):v \in N_G(u)\}$.
Let $V_4^6{'}$ be the set of vertices in $V_4^6$ 
that have exactly one neighbor of degree 6.
Let $V_4^6{''} = V_4^6 \setminus V_4^6{'}$.
Let 
$n_i=|V_i|$,
$n_4^j=|V_4^j|$,
$n_4^6{'}=|V_4^6{'}|$, and 
$n_4^6{''}=|V_4^6{''}|$.

By Claim \ref{lem:c3} and Claim \ref{lem:c4},
it follows that $d_G(N_G(u))\geq 19$ for every vertex $u$ of degree $4$.
This implies that every vertex in $V_4^5$ has at least three neighbors in $V_5$
and that every vertex in $V_4^6{'}$ has at least one neighbor in $V_5$.
By Claim \ref{claim3}, we obtain that 
$4n_5\geq 3n_4^5+n_4^6{'}$.
The definitions of $V_4^6{'}$ and $V_4^6{''}$ imply
$n_4^6=n_4^6{'}+n_4^6{''}$ and
$6n_6\geq n_4^6{'}+2n_4^6{''}$.
For $j \in \{7, \ldots, n-1\}$, 
the definition of $V_4^j$ implies 
$jn_j\geq n_4^j$.
By Claim \ref{claim4}, it follows that 
$\sum\limits_{j=5}^{n-1}jn_j\geq 2n_4$.

It follows that the number of edges of $G$ 
is at least the optimal value ${\rm OPT}(P)$ of the linear program $(P)$
shown below.

The following claim establishes our final contradiction.
\begin{claim}\label{claim5}
${\rm OPT}(P)\geq \frac{11}{5}n$.
\end{claim}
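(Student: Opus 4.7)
The plan is to prove the bound by LP duality: I will exhibit nonnegative multipliers for the four families of structural inequalities listed just before the claim, and verify that the resulting weighted combination, together with $n=\sum_i n_i$, forces the objective $\tfrac{1}{2}\sum_i i\,n_i$ to be at least $\tfrac{11}{5}n$.

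Concretely, I multiply the inequality $4n_5\geq 3n_4^5+n_4^{6'}$ by $\tfrac{2}{35}$, the inequality $6n_6\geq n_4^{6'}+2n_4^{6''}$ by $\tfrac{4}{35}$, each inequality $jn_j\geq n_4^j$ (for $j\geq 7$) by $\tfrac{6}{35}$, and the aggregate $\sum_{j\geq 5}jn_j\geq 2n_4$ by $\tfrac{1}{70}$, and add them. Collecting coefficients yields the combined inequality
\[
\frac{3}{10}n_5+\frac{27}{35}n_6+\sum_{j\geq 7}\frac{13j}{70}n_j \;\geq\; \frac{1}{5}\bigl(n_4^5+n_4^{6'}+\textstyle\sum_{j\geq 7}n_4^j\bigr)+\frac{9}{35}n_4^{6''},
\]
which is verified by a direct coefficient check (for instance $4\cdot\tfrac{2}{35}+5\cdot\tfrac{1}{70}=\tfrac{3}{10}$ on the $n_5$ side, and $\tfrac{2}{35}+\tfrac{4}{35}+2\cdot\tfrac{1}{70}=\tfrac{1}{5}$ on the $n_4^{6'}$ side, etc.).

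To finish I chain two simple slacks. Since $\tfrac{9}{35}\geq\tfrac{1}{5}$ together with $n_4=n_4^5+n_4^{6'}+n_4^{6''}+\sum_{j\geq 7}n_4^j$, the right-hand side above is bounded below by $\tfrac{1}{5}n_4$. Moreover, $\tfrac{4}{5}\geq\tfrac{27}{35}$ and the inequality $\tfrac{j}{2}-\tfrac{11}{5}\geq\tfrac{13j}{70}$ (equivalent to $22j\geq 154$, hence to $j\geq 7$) show that the left-hand side is bounded above by $\tfrac{3}{10}n_5+\tfrac{4}{5}n_6+\sum_{j\geq 7}(\tfrac{j}{2}-\tfrac{11}{5})n_j$. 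Chaining produces
\[
\frac{3}{10}n_5+\frac{4}{5}n_6+\sum_{j\geq 7}\Big(\frac{j}{2}-\frac{11}{5}\Big)n_j \;\geq\; \frac{1}{5}n_4,
\]
which is $\sum_i(\tfrac{i}{2}-\tfrac{11}{5})n_i\geq 0$, and via $n=\sum_i n_i$ this rearranges to $\tfrac{1}{2}\sum_i i\,n_i\geq\tfrac{11}{5}n$, i.e., ${\rm OPT}(P)\geq\tfrac{11}{5}n$.

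The main obstacle is finding the multipliers. The primal LP is rather tight: one can check that its minimum average degree is only marginally above the target $\tfrac{22}{5}$ (namely about $\tfrac{75}{17}$, achieved by a mix involving $V_4^{6'}$, $V_5$, $V_6$), so the dual weights are essentially pinned down by complementary slackness at the primal optimum and must be chosen with care. Once they are in hand, every remaining step is a routine coefficient check.
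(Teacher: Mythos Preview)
Your proof is correct and is essentially the paper's argument: you combine the primal constraints with precisely the same multipliers $\tfrac{2}{35},\tfrac{4}{35},\tfrac{6}{35},\tfrac{1}{70}$ that the paper exhibits as its dual feasible solution $(y_5,y_6,y_j,x_4)$, just presenting the computation as a direct weighted sum of the inequalities rather than formally writing out the dual LP and invoking weak duality. The additional ``slack'' steps you insert (replacing $\tfrac{27}{35}$ by $\tfrac{4}{5}$, etc.) correspond exactly to the non-tight dual constraints in the paper's feasible solution.
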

\begin{proof}[Proof of Claim \ref{claim5}]
In order to formulate the dual linear program $(D)$ for $(P)$,
we introduce dual variables:
For the first four constraints in $(P)$, 
the dual variables are $x_1$, $x_2$, $x_3$, and $x_4$,
respectively, where $x_4$ is required to be non-negative, 
because the fourth constraint is an inequality.
For the next two constraints, 
the dual variables are $y_5$ and $y_6$,
which are both required to be non-negative.
Finally, for the constraint $jn_j-n_4^j\geq 0$ with $j\in \{ 7,\ldots,n-1\}$,
we introduce the non-negative dual variable $y_j$.
With these dual variables, the dual linear program $(D)$ is as shown below.

$$
(P)\hspace{2cm}\begin{array}{lrlll}
\mbox{minimize} & \frac{1}{2}\sum\limits_{i=4}^{n-1}i n_i & & &\\[4mm]
\mbox{subject to} & \sum\limits_{i=4}^{n-1}n_i & = & n &\\
				& n_4-\sum\limits_{j=5}^{n-1}n_4^j & = & 0 &\\
				& n_4^6-{n_4^6}'-{n_4^6}'' & = & 0 &\\
				& \sum\limits_{j=5}^{n-1}jn_j-2n_4 & \geq & 0 &\\[4mm]
				& 4n_5-3n_4^5-{n_4^6}' & \geq & 0 &\\
				& 6n_6-{n_4^6}'-2{n_4^6}'' & \geq & 0 &\\
				& jn_j-n_4^j & \geq & 0 & \mbox{ for all }j\in \{ 7,\ldots,n-1\}\\[4mm]
				& n_i & \geq & 0 & \mbox{ for all }i\in \{ 4,\ldots,n-1\}\\
				& n_4^j & \geq & 0 & \mbox{ for all }i\in \{ 5,\ldots,n-1\}\\
				& {n_4^6}',{n_4^6}'' & \geq & 0. &
\end{array}
$$

$$
(D)\hspace{2cm}\begin{array}{lrlll}
\mbox{maximize} & nx_1 & & &\\
\mbox{subject to} & x_2-2x_4+x_1 & \leq & 2 &\\
				& 4y_5+5x_4+x_1 & \leq & \frac{5}{2} &\\
				& 6y_6+6x_4+x_1 & \leq & 3 &\\
				& jy_j+jx_4+x_1 & \leq & \frac{j}{2} & \mbox{ for all }j\in \{ 7,\ldots,n-1\}\\[4mm]
				& -x_2-3y_5 & \leq & 0 &\\
				& -x_2+x_3 & \leq & 0 &\\
				& -x_2-y_j & \leq & 0 & \mbox{ for all }j\in \{ 7,\ldots,n-1\}\\
				& -y_5-y_6-x_3 & \leq & 0 &\\
				& -2y_6-x_3 & \leq & 0 &\\[4mm]
				& x_1,x_2,x_3 & \in & \mathbb{R} &\\
				& x_4 & \geq & 0 &\\
				& y_j & \geq & 0 & \mbox{ for all }j\in \{ 5,\ldots,n-1\}.
\end{array}
$$
Since 
$x_1=\frac{11}{5}$,
$x_2=x_3=-\frac{6}{35}$,
$x_4=\frac{1}{70}$,
$y_5=\frac{2}{35}$,
$y_6=\frac{4}{35}$, and
$y_j=\frac{6}{35}$ for $j\in \{ 7,\ldots,n-1\}$
is a feasible solution for $(D)$,
weak duality implies 
${\rm OPT}(P)\geq {\rm OPT}(D)\geq nx_1=\frac{11}{5}n$,
which completes the proof.
\end{proof}
As observed above, this completes the proof of Theorem~\ref{theorem2}.
\end{proof}

\section{Conclusion}

We mention some problems for further research.

Clearly, the main problem motivated by the current paper is Conjecture \ref{conjecture1}.
Provided the conjecture is true,
it might be possible to constructively characterize the graphs of order $n$
with $3n-6$ edges that do not have a forest cut
similarly as done by Le and Pfender \cite{lepf}
for graphs without an independent cut.
Finally, we propose one further open problem.
Every graph $G$ that has no forest cut is necessarily $3$-connected
and, for every vertex $u$ of $G$, the subgraph $G[N_G(u)]$ contains
a cycle. 
For $k\in\mathbb{N}$,
let $G_k$ be the graph that arises from 
the cycle $u_0u_1\ldots u_{3k+2}u_0$ 
by adding the edges 
$u_0u_2,u_3u_5,\ldots,u_{3k}u_{3k+2}$
and by adding one further universal vertex.
The graph $G_k$ 
has $3k+4$ vertices and $7k+7$ edges, 
is $3$-connected and
every neighborhood of a vertex contains a cycle.
We believe that there is no such graph with less edges
and pose the following.

\begin{conjecture}\label{conjecture2}
If $G$ is a graph of order $n$ and size $m$
such that $G$ is $3$-connected and
every neighborhood of a vertex contains a cycle,
then $m\geq \frac{7}{3}(n-1)$.
\end{conjecture}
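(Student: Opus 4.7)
I plan to prove Conjecture~\ref{conjecture2} by a discharging / linear programming argument analogous to the proof of Theorem~\ref{theorem2}.

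The first step is to extract basic structural information from the hypotheses. Since every $G[N_G(u)]$ contains a cycle, we have $\delta(G)\geq 3$ and $|E(G[N_G(u)])|\geq 3$ for every~$u$, so summing the latter yields at least $n$ triangles in~$G$. When $d_G(u)=3$, the subgraph $G[N_G(u)]$ must be a triangle and hence $\{u\}\cup N_G(u)$ induces a $K_4$. By 3-connectivity and $n\geq 5$, the three neighbors of $u$ must all have degree at least~$4$; otherwise a second degree-$3$ vertex would lie in the same $K_4$, and the remaining two $K_4$-vertices would form a $2$-cut. In particular, no two degree-$3$ vertices can share an edge. Next, in the spirit of Claims~\ref{claim4}, \ref{lem:c3}, and~\ref{lem:c4}, one would derive finer local constraints for degree-$4$ and degree-$5$ vertices, bounding $d_G(N_G(u))$ from below whenever $u$ has small degree and possibly forcing specific configurations (such as a common high-degree neighbor) on nearby low-degree vertices.

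With these structural inequalities in hand, I would set up a linear program with variables $n_i=|V_i|$ (and, if needed, finer classifications such as the $V_4^j$, $V_4^{6'}$, and $V_4^{6''}$ used in the main proof), minimize $\tfrac{1}{2}\sum_i i\,n_i$, and verify via weak duality that the optimum is at least $\tfrac{7}{3}(n-1)$. A candidate feasible dual solution should make the extremal example $G_k$ tight: in $G_k$, all degree-$3$ and degree-$4$ vertices attain charge exactly $\tfrac{14}{3}$ after discharging, while the unique hub vertex of degree $3k+3$ absorbs the residual deficit of $\tfrac{14}{3}$, which matches the additive slack in $\tfrac{7}{3}(n-1)$.

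The principal obstacle is twofold. First, the structural inequalities must be simultaneously strong enough to certify the ratio $\tfrac{7}{3}$ and valid in every admissible~$G$: since $G_k$ concentrates the excess at a single vertex of unbounded degree, the LP must somehow capture that clusters of low-degree vertices have to be supported by high-degree ``hubs'' rather than by distributed excess, and this non-locality is hard to enforce via purely local constraints. Second, the additive constant $-\tfrac{14}{3}$ is delicate; small exceptional graphs (such as $K_4$ or $K_5-e$, which satisfy the hypotheses with rather few edges) will likely have to be handled separately as base cases outside the LP framework.
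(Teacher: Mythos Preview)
There is nothing to compare your proposal against: Conjecture~\ref{conjecture2} is stated in the paper's conclusion as an \emph{open problem}, and the paper gives no proof of it. What you have written is likewise not a proof but an outline of a possible strategy, and you explicitly identify the two principal obstacles that you do not know how to overcome. So at this point neither the paper nor your proposal establishes the statement.

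One concrete flaw in your sketch deserves mention. You propose to treat $K_4$ and $K_5-e$ as ``base cases outside the LP framework,'' but both graphs in fact \emph{violate} the claimed inequality: $K_4$ is $3$-connected with every neighborhood a triangle, yet has $m=6<7=\tfrac{7}{3}(n-1)$; similarly $K_5-e$ satisfies the hypotheses with $m=9<\tfrac{28}{3}$. These are not base cases that can be checked by hand --- they are counterexamples to the conjecture as literally stated. Any attempt along the lines you describe would therefore first need to amend the statement (for instance by imposing $n\geq 7$, the smallest order of one of the extremal graphs $G_k$), and the LP/discharging argument would then have to be set up so that the additive slack is absorbed only by configurations that cannot occur for small $n$. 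Until that is resolved, the plan cannot succeed.
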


\noindent {\bf Acknowledgement} This work was partially supported 
by the International Research Center  
``Innovation Transportation and Production Systems'' of the I-SITE CAP 20-25
and by the Deutsche Forschungsgemeinschaft 
(DFG, German Research Foundation) -- project number 545935699.


\begin{thebibliography}{}
\bibitem{beraraso} S. Bessy, J. Rauch, D. Rautenbach, and U.S. Souza, Sparse vertex cutsets and the maximum degree, arXiv:2304.10353.
\bibitem{BrDrLeSz} A. Brandst\"{a}dt, F.F. Dragan, V.B. Le, and T. Szymczak, On stable cutsets in graphs, Discrete Applied Mathematics 105 (2000) 39--50.
\bibitem{MR1936948} G. Chen, R.J. Faudree, and M.S. Jacobson, Fragile graphs with small independent cuts, Journal of Graph Theory 41 (2002) 327--341.
\bibitem{chyu} G. Chen and X. Yu, A note on fragile graphs, Discrete Mathematics 249 (2002) 41--43.
\bibitem{ch} V. Chv\'{a}tal, Recognizing decomposable graphs, Journal of Graph Theory 8 (1984) 51--53.
\bibitem{codhgo} K. Coolsaet, S. D'hondt, and J. Goedgebeur, House of Graphs 2.0: A database of interesting graphs and more, Discrete Applied Mathematics 325 (2023) 97--107.
\bibitem{cofo} D.G. Corneil and J. Fonlupt, Stable set bonding in perfect graphs and parity graphs, Journal of Combinatorial Theory. Series B 59 (1993) 1--14.
\bibitem{klfi} S. Klein and C.M.H. de Figueiredo, The NP-completeness of multi-partite cutset testing, Congressus Numerantium 119 (1996) 217--222.
\bibitem{le_mosca_mueller_2008} V.B. Le, R. Mosca, and H. M\"{u}ller, On stable cutsets in claw-free graphs and planar graphs, Journal of Discrete Algorithms 6 (2008) 256--276.
\bibitem{krle} S. Kratsch and V.B. Le, On polynomial kernelization for Stable Cutset, arXiv 2407.02086.
\bibitem{lepf} V.B. Le and F. Pfender, Extremal graphs having no stable cutsets, Electronic Journal of Combinatorics 20 (2013) Paper 35, 7.
\bibitem{LeRa} V.B. Le and B. Randerath, On stable cutsets in line graphs, Theoretical Computer Science 301 (2003) 463--475.
\bibitem{MaOsRa} D. Marx, B. O'Sullivan, and I. Razgon, Finding small separators in linear time via treewidth reduction, ACM Transactions on Algorithms 9 (2013) Art. 30.
\bibitem{raraso} J. Rauch, D. Rautenbach, and U.S. Souza, Exact and parameterized algorithms for the independent cutset problem, Lecture Notes in Computer Science 14292 (2023) 378--391.
\bibitem{tu} A. Tucker, Coloring graphs with stable cutsets, Journal of Combinatorial Theory. Series B 34 (1983) 258--267.
\end{thebibliography}
\end{document}